\let\@@enum@org\@@enum@
\def\@@enum@[#1]{\@@enum@org[\normalfont #1]}
\newtheorem{thm}{Theorem}[section]
\newtheorem{lem}[thm]{Lemma}
\newtheorem{cor}[thm]{Corollary}
\newtheorem{prop}[thm]{Proposition}
\newtheorem{con}[thm]{Conjecture}
\newtheorem*{claim*}{Claim}
\theoremstyle{remark}
\newtheorem*{hint}{Hint}
\theoremstyle{definition}
\newtheorem{defn}[thm]{Definition}
\newtheorem{prob}{Problem}[section]
\newcommand\be{\begin{enumerate}}
\newcommand\ee{\end{enumerate}}
\newcommand\bp{\begin{proof}}
\newcommand\ep{\end{proof}}
\newcommand\bpp{\begin{prop}}
\newcommand\epp{\end{prop}}
\newcommand\bpb{\begin{prob}}
\newcommand\epb{\end{prob}}
\newcommand\bd{\begin{defn}}
\newcommand\ed{\end{defn}}
\newcommand\bh{\begin{hint}}
\newcommand\eh{\end{hint}}
\newcommand\C{\mathbb{C}}
\newcommand\Ind{\operatorname{Ind}}
\newcommand\Mod{\operatorname{Mod}}
\newcommand\yt{\widetilde}
\def\thetitle{{A representation theoretic characterization of simple closed curves on a surface}}
\def\theshorttitle{}
\begin{document}
\raggedbottom
\title[\theshorttitle]\thetitle
\date{\today}
\keywords{}

\author{Thomas Koberda}
\address{Department of Mathematics, University of Virginia, Charlottesville, VA 22904-4137, USA}
\email{thomas.koberda@gmail.com}

\author{Ramanujan Santharoubane}
\address{Department of Mathematics, University of Virginia, Charlottesville, VA 22904-4137, USA}
\email{ramanujan.santharoubane@gmail.com}

\begin{abstract}
We produce a sequence of finite dimensional representations of the fundamental group $\pi_1(S)$ of a closed surface where all simple closed curves act with finite order, but where each non--simple closed curve eventually acts with infinite order. As a consequence, we obtain a representation theoretic algorithm which decides whether or not a given element of $\pi_1(S)$ has a representative in its free homotopy class which is a simple closed curve. The construction of these representations combines ideas from TQFT representations of mapping class groups with effective versions of LERF for surface groups.
\end{abstract}

\maketitle

\section{Introduction}

Let $S=S_{g}$ be a closed and orientable surface of genus $g\geq 2$. A \emph{simple closed curve} on $S$ is a homotopically essential embedding of the circle $S^1$ into $S$. Fixing a basepoint on $S$, an element $1\neq \gamma\in\pi_1(S)$ is called \emph{simple} if it has a simple closed curve in its free homotopy class, and \emph{non--simple} otherwise. A homotopy class of loops $\gamma\colon S^1\to S$ is called a \emph{proper power} if there is a loop $\gamma_0\colon S^1\to S$ and an $n>1$ such that $\gamma=\gamma_0^n$, i.e. $\gamma_0$ concatenated with itself $n$ times. In this article, we propose a representation theoretic characterization of the simple elements in $\pi_1(S)$, which thus relates a topological property of a conjugacy class in $\pi_1(S)$ with the representation theory of $\pi_1(S)$. Precisely:

\begin{thm}\label{thm:main}
Let $X$ be a fixed finite generating set for $\pi_1(S)$, and let $\ell_X$ be the corresponding word metric on $\pi_1(S)$. There exists an explicit sequence of finite dimensional complex representations $\{\rho_i\}_{i\geq 1}$ such that:
\begin{enumerate}
\item
For each simple element $1\neq \gamma\in\pi_1(S)$ and each $i$, the linear map $\rho_i(\gamma)$ has finite order bounded by a constant which depends only on $i$;
\item
An element $1\neq \gamma\in\pi_1(S)$ which is not a proper power is non--simple if and only if $\rho_i(\gamma)$ has infinite order for all $i\geq \ell_X(\gamma)$.
\end{enumerate}
\end{thm}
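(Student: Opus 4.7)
My plan is to build $\rho_i$ as an induced representation of a TQFT-twisted point-push representation coming from a finite cover of $S$. The key mechanism is the Birman exact sequence: on any surface $\Sigma$ with basepoint, point-pushing gives an embedding $\iota_\Sigma\colon \pi_1(\Sigma) \hookrightarrow \Mod(\Sigma,*)$, and a simple based loop $\alpha$ maps to the product $T_\delta T_\epsilon^{-1}$ of commuting Dehn twists along the two disjoint boundary components $\delta,\epsilon$ of a regular neighborhood of $\alpha$. In the $\SO(3)$ Witten--Reshetikhin--Turaev TQFT representation $V_p$ of $\Mod(\Sigma,*)$ at odd level $p$, every Dehn twist has finite order dividing $2p$, so $V_p\circ\iota_\Sigma$ sends simple based loops to finite-order elements. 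For non-simple loops one expects the point-push to be pseudo-Anosov-like, and an AMU-type input should force infinite order at high enough level.

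Concretely I would fix a nested exhaustive sequence $\{N_i\}$ of finite-index normal subgroups of $\pi_1(S)$ with associated covers $S_i\to S$, chosen using effective LERF so that the index $[\pi_1(S):N_i]$ grows in a controlled way with $i$ and so that $N_i$ separates the self-intersection data of loops of word length at most $i$. Picking odd levels $p_i\to\infty$, set
\[
\rho_i \;:=\; \Ind_{\pi_1(S_i)}^{\pi_1(S)}\bigl( V_{p_i}\circ\iota_{S_i}\bigr).
\]
For part (1), let $\gamma\in\pi_1(S)$ be simple. By Lagrange some power $\gamma^k$ with $k\leq [\pi_1(S):N_i]$ lies in $\pi_1(S_i)$, and since simplicity is preserved by covers, $\gamma^k$ is freely homotopic to a power of a simple closed curve on $S_i$. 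Its point-push there is therefore a power of $T_\delta T_\epsilon^{-1}$, with image under $V_{p_i}$ of order bounded purely in terms of $p_i$; normality of $N_i$ means the same holds for all conjugates of $\gamma^k$ lying in $\pi_1(S_i)$, so the induced operator $\rho_i(\gamma)$ has order bounded purely in terms of $i$. This yields part (1) and simultaneously the easy direction of part (2).

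The hard direction of (2) is the main obstacle. Let $\gamma$ be non-simple and not a proper power, with $\ell_X(\gamma)\leq i$; one must show $\rho_i(\gamma)$ has infinite order, which by the induction formalism reduces to showing $V_{p_i}(\iota_{S_i}(g_j^{-1}\gamma^m g_j))$ has infinite order for some coset representative $g_j$ and $m$ with $g_j^{-1}\gamma^m g_j\in\pi_1(S_i)$. My strategy is to use effective LERF to arrange that this lifted element remains non-simple on $S_i$ --- the non-proper-power hypothesis being exactly what prevents the self-intersections of $\gamma$ from being unwrapped by a cover, so with $N_i$ chosen fine enough as a function of $i$ (by an effective separability statement in the style of Patel), a genuine self-intersection survives --- and moreover fills some essential subsurface $\Sigma'\subseteq S_i$ on which the point-push acts as a pseudo-Anosov. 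The final input is an AMU-type asymptotic faithfulness statement in $V_{p_i}$, which at sufficiently high level $p_i$ forces such pseudo-Anosov point-pushes to act with infinite order. The heart of the proof is the quantitative matching between the effective LERF bound on $[\pi_1(S):N_i]$ and the level $p_i$ required for the TQFT input; this step is delicate since the full AMU conjecture is open, but the construction of the covers is tailored so that only a well-behaved sub-family of pseudo-Anosov point-pushes needs to be controlled by the TQFT.
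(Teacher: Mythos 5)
Your part (1) and the induction/finite-order bookkeeping are fine and essentially match what the paper does (powers of simple elements lift to simple elements, point-pushes of simple loops are products of commuting Dehn twists, which have finite order in the $\SO(3)$ TQFT, and induction multiplies orders by at most the index). The gap is in the hard direction of (2): you ultimately invoke ``an AMU-type asymptotic faithfulness statement'' forcing pseudo-Anosov point-pushes to have infinite order at high level. That statement is precisely the open AMU conjecture (Conjecture~\ref{AMU2} in the paper for surface groups), and your covers are only engineered to keep the lift \emph{non-simple}, so the ``well-behaved sub-family'' of point-pushes you would need to control is in effect the family of all primitive non-simple elements in finite covers --- i.e.\ the full conjecture. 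Nothing in your construction reduces this family to one where an unconditional TQFT input is available, so as written the proof does not close.

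The missing idea is to use LERF not merely to preserve a self-intersection but to \emph{improve} the curve: splitting a non-simple geodesic at one self-intersection point gives a $\pi_1$-injective map of a wedge of two circles, and Scott's theorem embeds this wedge in a finite cover, so the lift of $\gamma$ has an \emph{embedded essential figure eight} as its geodesic representative (Lemmas~\ref{lem:P LERF} and~\ref{lem:P LERF2}). For such figure eight loops the infinite-order statement is a theorem, not a conjecture: it follows from the authors' earlier work \cite{KoberdaSantharoubane}, and since there are only finitely many figure eight loops up to the mapping class group action and the quantum representations are restrictions of mapping class group representations, a \emph{single} level $p_0$ works for all of them (Lemma~\ref{lem:KobSanth}). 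The paper then builds $\rho_i$ as a direct sum, over the finitely many non-simple $\gamma$ with $\ell_X(\gamma)\leq i$, of representations induced from a cover tailored to each $\gamma$; this also dissolves the ``quantitative matching'' between cover index and TQFT level that you identify as the heart of your argument --- no such matching is needed once the level is fixed at $p_0$ and the covers are chosen per element rather than as one normal exhausting tower.
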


\begin{cor}\label{cor:representation}
Let $\gamma\in\pi_1(S)$ be non--simple and not a proper power. Then there is a finite dimensional linear representation $\rho$ of $\pi_1(S)$ under which all simple elements have finite order, but where $\rho(\gamma)$ has infinite order.
\end{cor}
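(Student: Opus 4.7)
The corollary follows essentially immediately from Theorem \ref{thm:main} by picking a single well-chosen term from the sequence $\{\rho_i\}_{i\geq 1}$. My plan is as follows: given the non--simple, non--proper--power element $\gamma\in\pi_1(S)$, compute the word length $n=\ell_X(\gamma)$ with respect to the fixed generating set $X$, and set $\rho=\rho_n$ (or any $\rho_i$ with $i\geq n$).

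To verify this works, I would invoke the two parts of Theorem \ref{thm:main} in turn. For the first condition, part (1) applied to the fixed index $i=n$ says that every simple element $1\neq\eta\in\pi_1(S)$ satisfies that $\rho_n(\eta)$ is of finite order, bounded uniformly by a constant $C(n)$. In particular, all simple elements have finite order under $\rho$. For the second condition, since $\gamma$ is non--simple and not a proper power, part (2) of Theorem \ref{thm:main} guarantees that $\rho_i(\gamma)$ has infinite order for every $i\geq\ell_X(\gamma)=n$; in particular, $\rho(\gamma)=\rho_n(\gamma)$ has infinite order.

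There is no substantive obstacle in this argument: the entire content of the corollary is already packaged inside Theorem \ref{thm:main}, and the only observation needed is that a single representation in the sequence simultaneously detects the non--simplicity of the specified $\gamma$ while still sending every simple element to a finite--order matrix. The only mild subtlety is that the bound $C(i)$ on the order of $\rho_i$ applied to simple elements depends on $i$, so one cannot hope for a single $\rho$ that works for all non--simple $\gamma$ at once; but since the corollary only asks for a representation \emph{depending on} $\gamma$, this causes no difficulty.
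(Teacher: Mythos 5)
Your argument is correct and is exactly how the paper intends the corollary to be read: it follows immediately from Theorem \ref{thm:main} by taking $\rho=\rho_i$ for any $i\geq \ell_X(\gamma)$ (equivalently, one could take the single summand $\rho_\gamma$ constructed in the proof of the theorem). No further comment is needed.
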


Algebraic and algorithmic characterizations of simple closed curves on surfaces have been studied by many authors for quite some time. The first group--theoretic characterization of simplicity of elements in $\pi_1(S)$ was given by Zieschang (see ~\cite{Z1965},~\cite{Z1969}). Algorithms, both algebraic and geometric in nature, were given by Chillingworth in~\cite{Chillingworth1} and~\cite{Chillingworth2}, Turaev--Viro~\cite{TuraevViro}, Birman--Series~\cite{BirmanSeries} with extensions by Cohen--Lustig~\cite{CohenLustig}, Hass--Scott~\cite{HassScott}, Arettines~\cite{Arettines}, Cahn~\cite{Cahn}, and Chas--Krongold in~\cite{ChasKrongold1} and~\cite{ChasKrongold2}. A homological characterization using finite covering spaces was given by Boggi~\cite{Boggi}. The foundational work of Goldman~\cite{Goldman} has guided much of the progress in the study of algebraic characterizations of simple closed curves (see also~\cite{Turaev}).

\begin{cor}\label{cor:algorithm}
The representations $\{\rho_i\}_{i\geq 1}$ in Theorem~\ref{thm:main} are computable. In particular, there exists an algorithm which, by computing a certain representation of $\pi_1(S)$, decides whether or not an element $1\neq \gamma\in\pi_1(S)$ is simple.
\end{cor}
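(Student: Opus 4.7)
The plan is to separate the content of the corollary into two assertions: computability of the representations $\{\rho_i\}_{i\geq 1}$, and the existence of a decision procedure for simplicity. For the first, the explicit construction announced in Theorem~\ref{thm:main}, built from TQFT representations together with an effective application of LERF, should produce matrices for the images $\rho_i(x)$ of each generator $x \in X$ whose entries lie in an explicit number field, typically a cyclotomic field arising from the TQFT roots of unity. Once this is in place, $\rho_i(\gamma)$ is computed for an arbitrary $\gamma$ by a finite matrix product indexed by any word representing $\gamma$.

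Granting the explicit description of $\rho_i$, the decision algorithm proceeds as follows. Given a word $w$ in $X$ representing $\gamma \in \pi_1(S)$, first invoke the solution to the word problem in surface groups to confirm $\gamma \neq 1$. Next, test whether $\gamma$ is a proper power: since $\pi_1(S)$ is torsion-free and word-hyperbolic, one can decide whether $\gamma = \alpha^n$ for some $n \geq 2$ by a bounded search over the finitely many candidate roots $\alpha$ of sufficiently small length. If $\gamma$ is a proper power, output \emph{non-simple}, since a non-trivial proper power can never be freely homotopic to an embedded loop. Otherwise, set $i$ equal to the length of $w$ (which is at least $\ell_X(\gamma)$ and hence satisfies the hypothesis of part~(2) of Theorem~\ref{thm:main}), compute $M = \rho_i(\gamma)$, and compute $M^{N_i}$, where $N_i$ is the uniform order bound on $\rho_i$ applied to simple elements supplied by part~(1) of Theorem~\ref{thm:main}. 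By Theorem~\ref{thm:main}, the element $\gamma$ is simple if and only if $M^{N_i} = I$: if $\gamma$ is simple, then the order of $M$ divides $N_i$; if $\gamma$ is non-simple and not a proper power, then $M$ has infinite order and so $M^{N_i} \neq I$.

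The main obstacle will be verifying that the constants $N_i$ are in fact extractable as an explicit, computable function of $i$ from the construction of part~(1), and that the field in which the matrix entries of $\rho_i$ live admits an effective equality test. Both issues should resolve themselves through the TQFT-theoretic side of the construction: the representations will act on finite-dimensional Hermitian spaces with matrix entries in a computable cyclotomic field, and finite-order subgroups of the relevant unitary groups with coefficients in a fixed cyclotomic field have bounded exponent which can be estimated from $i$. Equality of algebraic numbers in such fields is algorithmic, so the test $M^{N_i} = I$ is effective. The proper power detection in $\pi_1(S)$ is entirely standard and contributes no additional difficulty.
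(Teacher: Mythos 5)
The decision-procedure half of your argument is fine in outline, but it is not where the content of the corollary lies: the computability of the $\rho_i$, which you explicitly defer (``granting the explicit description of $\rho_i$\dots should resolve themselves''), is exactly what has to be proved, and you give no argument for it. In the proof of Theorem~\ref{thm:main}, the representation attached to a non--simple $\gamma$ is built by first invoking Lemma~\ref{lem:P LERF}, i.e.\ Scott's LERF theorem, to find a finite cover $S'\to S$ on which the geodesic of $\gamma$ lifts to an embedded figure eight loop. As stated, that step is purely existential: there is no bound on the degree of $S'$ and no procedure for finding it, so the construction of Theorem~\ref{thm:main} does not yet produce matrices at all. The paper's proof of the corollary consists precisely of making this step effective: Lemma~\ref{lem:proper power} handles proper powers; Lemma~\ref{lem:qi} and Lemma~\ref{lem:intersect} bound the geodesic length and the self--intersection number of $\gamma$ by computable functions of $\ell_X(\gamma)$; and Patel's effective LERF theorem (Theorem~\ref{thm:patel}) then bounds the degree of a cover on which a non--simple $\gamma$ lifts to a figure eight. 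One takes all covers $S_1,\dots,S_N$ up to that degree, applies Lemma~\ref{lem:KobSanth} and induction (Lemma~\ref{lem:induction}) to each, and sums. This also removes a circularity that your phrasing hides: the formula for $\rho_i$ in the proof of Theorem~\ref{thm:main} is a direct sum over the \emph{non--simple} elements of length at most $i$, so computing it literally would require already knowing which elements are simple; the effective construction avoids this by summing over all covers of bounded degree regardless of simplicity. The ``main obstacles'' you identify (cyclotomic arithmetic, extracting $N_i$) are comparatively routine; the genuinely missing idea is effective LERF (or, at minimum, a terminating blind search over finite covers together with a decidable test for whether the lift of $\gamma$ is an embedded figure eight --- you argue neither).

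Two smaller points. First, part~(1) of Theorem~\ref{thm:main} bounds the order of $\rho_i(\gamma)$ for simple $\gamma$ by a constant $N_i$; it does not say the order \emph{divides} $N_i$, so your test $M^{N_i}=I$ is not correct as stated. Test instead whether $M^k=I$ for some $k\leq N_i$, or raise $M$ to the power $\operatorname{lcm}(1,\dots,N_i)$. Second, your appeal to bounded exponent of finite--order elements in unitary groups over a fixed cyclotomic field is unnecessary: the bound comes directly from the TQFT construction (order at most $2p_0$ in Lemma~\ref{lem:KobSanth}) and is then tracked through induction, which is how the paper obtains constants depending only on $i$.
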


The representations in Theorem \ref{thm:main} and Corollary \ref{cor:representation} come from TQFT representations of mapping class groups, combined with certain canonical representation theoretic constructions. To the authors' knowledge, Theorem~\ref{thm:main} gives the first representation theoretic and the first topological quantum field theoretic characterization of simple closed curves on a surface.

We remark that Theorem \ref{thm:main} and its corollaries all hold for compact orientable surfaces with boundary. We have limited our discussion to closed surfaces in order to make some of the statements cleaner, but the arguments work for surfaces with boundary in a manner which is essentially unchanged.

\section{Acknowledgements}
The authors thank J. Aramayona, V. Krushkal, G. Masbaum, and H. Parlier for helpful discussions. The authors are indebted to an anonymous referee for helpful comments which improved the paper. The first author is partially supported by Simons Foundation Collaboration Grant number 429836.

\section{Proof of the main result}

In this section, we gather the main facts needed to establish Theorem~\ref{thm:main}, relegating discussion to later sections. Throughout this section and the remainder of the paper, we will equip $S$ with an (unless otherwise stated, arbitrary) hyperbolic metric.

\subsection{TQFT representations of surface groups and figure eight loops}

Let $\gamma\subset S$ be closed geodesic with exactly one self--intersection, so that $\gamma$ can be termed a \emph{figure eight loop}. Fixing a basepoint for $S$ on $\gamma$ (say the self--intersection point), we can orient $\gamma$ and identify it with an element of $\pi_1(S)$. Note that the requirement that $\gamma$ be geodesic automatically guarantees that this figure eight loop is \emph{essential}, i.e. that the two simple subloops of $\gamma$ generate a copy of the free group $F_2<\pi_1(S)$. The following is a straightforward consequence of the main technical result of the authors~\cite{KoberdaSantharoubane}:

\begin{lem} \label{lem:KobSanth}
Let $S_g=S$ a closed surface of genus $g\geq 2$. There is a linear representation $\rho\colon\pi_1(S)\to\text{GL}_n(\C)$ such that:
\begin{enumerate}
\item
If $\gamma\in\pi_1(S)$ has an embedded essential figure eight loop as its geodesic representative, then $\rho(\gamma)$ has infinite order;
\item
The image of each simple element in $\pi_1(S)$ under $\rho$ has finite order bounded by $k$, where $k$ depends only on $\rho$.
\end{enumerate}
\end{lem}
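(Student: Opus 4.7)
The plan is to construct $\rho$ as the pullback of a TQFT representation via the point--pushing embedding coming from the Birman exact sequence $1\to\pi_1(S)\to\Mod(S,*)\to\Mod(S)\to 1$. Concretely, I would fix an odd prime $p\geq 5$, consider the $\SO(3)$ Witten--Reshetikhin--Turaev TQFT at level $p$, and take the resulting finite--dimensional representation $\tilde\rho\co\Mod(S,*)\to\GL_n(\C)$ associated to $S$ with a single marked point carrying a fixed admissible color. Setting $\rho := \tilde\rho\circ\iota$ for the point--pushing embedding $\iota\co\pi_1(S)\hookrightarrow\Mod(S,*)$ then produces the candidate representation.

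To verify part (2), I would use the standard fact that any simple element $\gamma\in\pi_1(S)$ has point--push $\iota(\gamma)=T_{\gamma_+}T_{\gamma_-}^{-1}$, where $\gamma_\pm$ are the two boundary components of a regular annular neighborhood of a simple representative of $\gamma$. In the $\SO(3)$ TQFT at level $p$, every Dehn twist acts by a matrix whose eigenvalues are roots of unity determined by $p$; hence each such twist, and therefore the product $\rho(\gamma)$, has order bounded by a constant $k$ depending only on $\rho$, independent of which simple class $\gamma$ one started with. This already delivers the uniform bound demanded by (2).

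Part (1) is the genuinely hard step, but it is precisely the content of the main technical result of \cite{KoberdaSantharoubane}, which establishes that the image under such a TQFT representation of the point--push of an essential figure--eight loop has infinite order. The obstacle handled in that paper is that TQFT representations are nearly unitary and send every simple--loop point--push to a finite--order matrix, so exhibiting a non--root--of--unity eigenvalue for the point--push of a figure--eight loop requires a delicate skein--theoretic construction that exploits the self--intersection of $\gamma$. Since the conclusion of \cite{KoberdaSantharoubane} applies uniformly to every essential figure--eight within the single chosen TQFT, after invoking it nothing further is required; the present lemma is a straightforward repackaging, with (2) being elementary TQFT bookkeeping and (1) a direct citation.
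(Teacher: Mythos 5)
Your outline of the construction (restricting an $\SO(3)$ TQFT representation of $\Mod^1(S)$ to $\pi_1(S)$ via the Birman exact sequence) and your treatment of part (2) match the paper: since $\gamma_+$ and $\gamma_-$ are disjoint, the two twists commute and each acts with order controlled by $p$, giving the uniform bound (the paper simply records the bound $2p_0$). The genuine gap is in part (1), at exactly the point where you write that ``the conclusion of \cite{KoberdaSantharoubane} applies uniformly to every essential figure--eight within the single chosen TQFT, \ldots nothing further is required.'' That is not what the cited result gives. Theorem 1.1 of \cite{KoberdaSantharoubane} (even after the adjustment the paper notes is needed to make it apply to an arbitrary figure eight loop rather than a specific one) produces, for each figure eight loop $\gamma$, a threshold $p_0(\gamma)$ such that $\rho_p(\gamma)$ has infinite order only for $p\geq p_0(\gamma)$; the threshold depends on $\gamma$. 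So fixing an arbitrary odd prime $p\geq 5$ at the outset, as you do, does not yield a single representation under which \emph{every} embedded essential figure eight loop acts with infinite order, and your citation alone cannot close this.

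The paper bridges this uniformity gap with an argument you are missing, and it is the real content of the lemma beyond the citation: because $\rho_p$ is the restriction of a representation of the pointed mapping class group, whether $\rho_p(\gamma)$ has infinite order is constant on mapping class group orbits of loops; and embedded essential figure eight loops fall into only finitely many orbits under $\Mod^1(S)$ (this is where the hypothesis ``embedded'' in part (1) is actually used --- note your proposal never invokes embeddedness). Taking $p_0$ to be the maximum of the finitely many thresholds, one fixed level works for all embedded figure eight loops simultaneously. Without this equivariance-plus-finiteness step, your part (1) is unsupported for your fixed choice of $p$.
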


In the interest of brevity, we will not describe the representation $\rho$ in any detail. However, we will note that the representation $\rho$ is computable and can be written down explicitly. See~\cite{KoberdaSantharoubane}.

\begin{proof}[Proof of Lemma \ref{lem:KobSanth}]
 In \cite{KoberdaSantharoubane}, we considered a sequence of representations $$ \rho_{p} :  \pi_1(S_g) \to \mathrm{PGL}_{d(p,g)}(\mathbb{C})$$ indexed by odd integers $p \geq 3$. 
 
It is straightforward to adjust the proof of Theorem 1.1 of~\cite{KoberdaSantharoubane} to establish the following: if $\gamma$ is an arbitrary figure eight loop then there exists an integer $p_0(\gamma)$ such that for $p \geq p_0(\gamma)$, we have that $\rho_p(\gamma)$ has infinite order.

These representations were obtained from restrictions of quantum $\mathrm{SO}(3)$ representations of mapping class groups. This clearly implies that if $\rho_p(\gamma)$ has infinite order for some $p$, then $\rho_p(\gamma')$ has infinite order for all $\gamma'$ in the mapping class group orbit of $\gamma$. 

Now since there are only finitely many figure eight loops in $\pi_1(S_g)$ up to the action of the mapping class group, we can choose one integer $p_0$ such that if $\gamma$ is an arbitrary figure eight loop then $\rho_{p_0}(\gamma)$ is an infinite order element.

Finally, the image of any simple element in $\pi_1(S_g)$ under $\rho_{p_0}$ has order at most $2p_0$. 
\end{proof}

\subsection{Inducing up}

We will require some well--known fact from representation theory, which we gather here for the convenience of the reader. We will restrict to complex representations, though the discussion works over any field of characteristic zero.

Let $G$ be a group, let $H$ be a finite index subgroup of $G$, and let $V$ be a finite dimensional complex representation of $H$. The \emph{induced representation} of $G$ is a canonical way to turn $V$ into a finite dimensional representation of $G$. Precisely, we take \[\Ind_H^G V:=\C[G]\otimes_{\C[H]}V.\] It is standard that the complex dimension of $\Ind_H^G V$ is given by $[G:H]\cdot\dim V$. Frobenius reciprocity, suitably generalized to infinite groups, guarantees that if $h\in H$ acts with infinite order on $V$ then $h$ also acts with infinite order on $\Ind_H^G V$.

\begin{lem}\label{lem:induction}
Let $H<\pi_1(S)$ be a finite index subgroup, classifying a finite cover $S'\to S$. Let $V$ be a finite dimensional representation of $H$ such that each simple element of $H=\pi_1(S')$ acts with finite order. Then if $1\neq \gamma\in\pi_1(S)$ is simple, we have that $\gamma$ acts with finite order on $\Ind_H^G V$.
\end{lem}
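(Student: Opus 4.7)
The plan is to exploit the standard block-permutation description of induced representations, combined with the topological fact that a simple closed curve in $S$ lifts to a disjoint union of simple closed curves in any finite cover $S' \to S$.

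Concretely, set $G = \pi_1(S)$, $d = [G:H]$, and pick coset representatives $1 = g_1, \ldots, g_d$ for $G/H$, so that $\Ind_H^G V = \bigoplus_{i=1}^d g_i \otimes V$ as a vector space. Left multiplication by $\gamma$ on $G/H$ is a permutation $\sigma$; let $m_i$ denote the length of the $\sigma$-orbit containing $g_i H$, and let $m$ be a common multiple of the $m_i$ (e.g.\ $m = d!$). A direct computation using $\gamma g_i = g_{\sigma(i)} h_i$ with $h_i \in H$ shows that $\gamma^m$ preserves every summand $g_i \otimes V$ and acts on it by the element $h_i' := g_i^{-1} \gamma^m g_i \in H$. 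Hence, to conclude that $\gamma$ (and so $\gamma^m$) acts with finite order on $\Ind_H^G V$, it suffices to prove that each $h_i'$ acts with finite order on $V$.

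The topological input enters here. Let $c \subset S$ be a simple closed curve freely homotopic to $\gamma$, and let $p \co S' \to S$ denote the cover. Since $p$ is a local homeomorphism, $p^{-1}(c)$ is a disjoint union of simple closed curves in $S'$, one for each $\sigma$-orbit on $G/H$. Choosing for each orbit the representative $g_i$ and a basepoint on the corresponding lift, one checks that the element $g_i^{-1} \gamma^{m_i} g_i \in H$ is precisely the based homotopy class of the simple closed curve covering $c$ with degree $m_i$. In other words, $g_i^{-1} \gamma^{m_i} g_i$ is a simple element of $H = \pi_1(S')$. Since $m_i \mid m$, we obtain
\[ h_i' \;=\; g_i^{-1}\gamma^m g_i \;=\; \bigl(g_i^{-1}\gamma^{m_i} g_i\bigr)^{m/m_i}, \]
a power of a simple element. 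By hypothesis this element acts with finite order on $V$, completing the proof.

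I expect the main obstacle to be bookkeeping in the middle step: identifying the coset-orbits of $\sigma$ with the components of $p^{-1}(c)$, and verifying that under this identification the element $g_i^{-1} \gamma^{m_i} g_i$ really is a simple closed curve in $S'$ (rather than, say, a loop in $S'$ lying in a conjugate of $H$, or a power of a simple curve if one has miscounted the orbit length). Once the basepoint conventions are pinned down, everything reduces to the local-homeomorphism property of the cover, and the algebraic computation with induced modules is entirely routine.
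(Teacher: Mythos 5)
Your argument is correct and is exactly the proof the paper has in mind: the paper's (omitted) ``straightforward computation'' is the block--permutation structure of $\Ind_H^G V$ you write down, and its quoted key fact --- that a power of a simple $\gamma$ lifts to a simple element of $\pi_1(S')$ --- is precisely your identification of $g_i^{-1}\gamma^{m_i}g_i$ with a component of the preimage of the simple closed curve, which is embedded because a covering map is a local homeomorphism. The basepoint/coset bookkeeping you flag only changes the element by conjugation in $H$, which is harmless since simplicity and finite order of the action are conjugation--invariant, so there is no gap.
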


\begin{proof}
The lemma follows from a straightforward computation, using the fact that if $\gamma\in\pi_1(S)$ is simple and $S'\to S$ is a finite cover of $S$, then some power of $\gamma$ lifts to a simple element of $\pi_1(S')$. We omit the details in the interest of brevity.
\end{proof}

\subsection{Subgroup separability}

Surface groups enjoy \emph{subgroup separability} (also called \emph{LERF}; see Section~\ref{sec:fig8}), which implies the following fact:

\begin{lem}\label{lem:P LERF}
Let $\gamma\subset S$ be a closed geodesic with at least one self--intersection. Then there is a finite cover $S'\to S$ such that $\gamma$ lifts to a figure eight loop in $S'$.
\end{lem}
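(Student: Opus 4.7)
My plan is to use the subgroup separability (LERF) of surface groups, due to Scott, to engineer the cover: select one self-intersection of $\gamma$ to preserve and separate away all others by a suitable finite-index subgroup.

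To set up, I would fix a self-intersection point $p$ of $\gamma$ and move the basepoint there. Then $\gamma$ decomposes as a product $\gamma = \alpha\beta$ in $\pi_1(S, p)$, where $\alpha$ and $\beta$ are the two sub-loops meeting at $p$. Let $F := \langle \alpha, \beta \rangle \leq \pi_1(S)$, a finitely generated subgroup containing $\gamma$. Each remaining self-intersection of $\gamma$ is indexed by a parameter pair $(s_j, t_j)$ with $\gamma(s_j) = \gamma(t_j)$; to it I would associate the \emph{switching element}
\[ g_j := \gamma|_{[0, t_j]} \cdot \gamma|_{[0, s_j]}^{-1} \in \pi_1(S, p). \]

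The dictionary between covers and self-intersections is as follows: if $H \leq \pi_1(S, p)$ is a finite-index subgroup classifying a cover $S' \to S$, then the lift of $\gamma$ starting at a chosen lift of $p$ is a closed loop iff $\gamma \in H$, keeps its self-intersection at $p$ iff $\alpha \in H$ (equivalently $F \subseteq H$), and keeps the $j$-th other self-intersection iff $g_j \in H$. It therefore suffices to produce a finite-index $H$ containing $F$ but missing each $g_j$. Scott's theorem asserts that the finitely generated subgroup $F$ is closed in the profinite topology on $\pi_1(S)$, which allows us to separate $F$ from each single element outside $F$ by a finite-index subgroup; taking the intersection over the finite list of $g_j$'s then yields the desired $H$.

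The step I expect to require most care is the verification that each $g_j$ actually lies outside $F$. If some $g_j$ happens to belong to $F$, then the $j$-th self-intersection cannot be eliminated while $F$ is preserved, and a different base self-intersection must be chosen. I would handle this obstruction by induction on the number of self-intersections: first use LERF to pass to a cover where the self-intersection count of the lift strictly drops but remains at least one, and then apply the construction to the lifted curve. This bookkeeping is the main delicate point of the argument, but it is a routine consequence of the flexibility of LERF.
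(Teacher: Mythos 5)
Your construction is, in substance, the same argument as the paper's: your subgroup $F=\langle\alpha,\beta\rangle$ is the image of the $\pi_1$--injective figure eight map $f\colon E\to S$ of Lemma~\ref{lem:figure8}, your covering--space dictionary relating the surviving self--intersections of the lift to the switching elements $g_j$ is correct, and your final step (separate $F$ from finitely many elements and intersect) is exactly how LERF enters in Lemmas~\ref{lem:lerf} and~\ref{lem:P LERF2}. The difference is where the real content sits. The paper's proof rests on the assertion that, \emph{because $\gamma$ is a geodesic}, the lift $\tilde f\colon\widetilde E\to\widetilde S$ is an embedding; in your language this is precisely the statement that every switching element $g_j$ lies outside $F$. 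You have correctly isolated this as the delicate point, but you neither prove it nor supply a working substitute, and your write--up never uses the geodesic hypothesis at all beyond fixing a representative --- a sign that an ingredient is missing.

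The fallback induction does not close the gap and is essentially circular: to pass to a cover in which the self--intersection count strictly drops \emph{but remains at least one}, you must retain some crossing, i.e.\ take $H$ containing the petal subgroup $F'$ of that crossing, and then to drop the count you need some other switching element to lie outside $F'$ --- which is exactly the claim you set out to avoid. Moreover, the scenario you worry about genuinely occurs for a bad choice of base crossing, so the induction cannot be waved through. For example, let $P\subset S$ be an embedded subsurface with geodesic boundary and $\pi_1(P)=\langle a,b\rangle$ a pair of pants group, and let $\gamma$ be the geodesic in the class $a^2b^{-1}$, which lies in $P$ and has two self--intersections; one checks from the taut representative that at one of these crossings the two sub--loops, based there, are conjugates of $a$ and $ab^{-1}$ and generate $\pi_1(P)$. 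Basing at that crossing gives $F=\pi_1(P)$, and since every switching element is a loop contained in $\gamma\subset P$, all of them lie in $F$; hence in \emph{every} cover to which $\alpha$ and $\beta$ lift, the other crossing survives and no progress is possible from that base point. (The lemma itself is safe because the other crossing of this $\gamma$ works.) So a complete proof along your lines must either show that some crossing always has all its switching elements outside its petal subgroup and choose that crossing, or else invoke the geometry of geodesics in an essential way at exactly the point where the paper asserts that $\tilde f$ is an embedding. As it stands, your proposal has a genuine gap at its acknowledged ``delicate point.''
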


We will discuss subgroup separability and give a proof of Lemma~\ref{lem:P LERF} in Section~\ref{sec:fig8}.

\subsection{Proof of the main result}

Theorem~\ref{thm:main} follows easily from the facts in this section:

\begin{proof}[Proof of Theorem~\ref{thm:main}]
Let $1\neq\gamma\in\pi_1(S)$ be a non--simple element which is not a proper power, and represent $\gamma$ by a geodesic on $S$. By Lemma~\ref{lem:P LERF}, there is a finite cover $S'$ of $S$ to which $\gamma$ lifts and where it becomes a figure eight loop. By Lemma~\ref{lem:KobSanth}, there is a linear representation $\rho$ of $\pi_1(S')$ such that $\rho(\gamma)$ has infinite order, and such that each simple element of $\pi_1(S')$ has (uniformly bounded) finite order. Inducing $\rho$ to $\pi_1(S)$, we have that $\gamma$ still acts with infinite order. By Lemma~\ref{lem:induction}, all simple elements of $\pi_1(S)$ will still act with finite order.

Now, there are only finitely many elements in $\pi_1(S)$ of a given word length in the word metric $\ell_X$ coming from a finite generating set $X$. For each non--simple $\gamma\in\pi_1(S)$, write $\rho_{\gamma}$ for the representation produced in the previous paragraph, and write \[\rho_i=\bigoplus_{\gamma\in\pi_1(S),\, \ell_X(\gamma)\leq i} \rho_{\gamma},\] where $\gamma$ ranges over non--simple elements of $\pi_1(S)$.

Thus, if $\gamma$ is non--simple and not a proper power with $\ell_X(\gamma)\leq i$, then $\rho_i(\gamma)$ has infinite order. Since every factor in the direct sum decomposition of $\rho_i$ sends simple elements of $\pi_1(S)$ to uniformly bounded finite order elements, we obtain the claimed result.
\end{proof}

\section{Immersed figure eight loops}\label{sec:fig8}
Let $G$ be a group. We say that $G$ is \emph{locally extended residually finite} or \emph{LERF} if every finitely generated subgroup of $G$ is closed in the profinite topology on $G$. The topological meaning of LERF is given by the following well--known fact:

\begin{lem}\label{lem:lerf}
Let $X$ be a simplicial complex such that $\pi_1(X)$ is LERF, let $K$ be a finite simplicial complex, and let $f\colon K\to X$ be a simplicial map such that some lift \[\tilde{f}\colon\widetilde{K}\to\widetilde{X}\] of $f$ to the universal covers of $K$ and $X$ respectively is an embedding. Then there exists a finite cover $X'\to X$ and a lift $f'\colon K\to X'$ of $f$ such that $f'$ is an embedding.
\end{lem}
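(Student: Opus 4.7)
The plan is to prove this by a standard ``bad coset'' argument combined with compactness and LERF. First I would set $G := \pi_1(X)$ and note that since $\tilde{f}$ is an embedding, the induced action of $\pi_1(K)$ on $\tilde{f}(\widetilde{K})$ through $f_*$ is free; hence $f_* : \pi_1(K) \to G$ is injective and identifies $\pi_1(K)$ with a finitely generated subgroup $H \leq G$. The lift $\tilde{f}$ is then $H$-equivariant, and $\tilde{f}(\widetilde{K}) \subset \widetilde{X}$ is an $H$-invariant subcomplex on which $H$ acts with compact quotient.

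Next I would characterize when a lift $f' : K \to X'$ is injective, where $X' \to X$ is the finite cover associated to an intermediate subgroup $H \leq N \leq G$. A direct computation shows that $f'$ is injective precisely when, for every $g \in N \setminus H$, the translate $g \cdot \tilde{f}(\widetilde{K})$ is disjoint from $\tilde{f}(\widetilde{K})$. I will call a coset $gH \neq H$ in $G/H$ \emph{bad} if $g \cdot \tilde{f}(\widetilde{K}) \cap \tilde{f}(\widetilde{K}) \neq \emptyset$; the goal is then to produce a finite-index $N \geq H$ that avoids every bad coset.

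The step I expect to be the main obstacle is showing that there are only finitely many bad cosets. My plan is to fix a finite simplicial fundamental domain $F \subset \widetilde{K}$ for the $\pi_1(K)$-action, so that $\tilde{f}(F)$ is a compact subcomplex of $\widetilde{X}$. Given a bad coset $gH$ with witnessed intersection point $p \in g \cdot \tilde{f}(\widetilde{K}) \cap \tilde{f}(\widetilde{K})$, I would use the $H$-action on $\widetilde{K}$ to move both $p$ and its preimage under $g^{-1}$ into $\tilde{f}(F)$, producing a representative $g'$ of the same coset with $g' \cdot \tilde{f}(F) \cap \tilde{f}(F) \neq \emptyset$. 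Proper discontinuity of the deck action of $G$ on $\widetilde{X}$, together with finiteness of $\tilde{f}(F)$, forces the set of such $g'$ to be finite, so the bad cosets form a finite set $\{g_1 H, \ldots, g_n H\}$.

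Finally I would invoke LERF: since $H$ is finitely generated, it is closed in the profinite topology on $G$, so for each $g_i \notin H$ there exists a finite-index subgroup $N_i \leq G$ with $H \leq N_i$ and $g_i \notin N_i$. Taking $N := N_1 \cap \cdots \cap N_n$ yields a finite-index subgroup containing $H$ but meeting no bad coset. The corresponding finite cover $X' \to X$ then admits a lift $f' : K \to X'$ of $f$ which is injective by the characterization above, and since $K$ is compact the map $f'$ is automatically a topological embedding, as required.
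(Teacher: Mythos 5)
The paper does not actually supply a proof of this lemma (it is dismissed as standard), so the comparison is with the standard argument, which is indeed the one you are following; your setup, the injectivity of $f_*$, the reformulation of injectivity of $f'$ in terms of translates of $\tilde{f}(\widetilde{K})$, and the final appeal to LERF are all correct. There is, however, a concrete error in the finiteness step. The element you produce by moving the two points into $\tilde{f}(F)$ has the form $g'=h_2\,g\,h_1$ with $h_1,h_2\in H$: right multiplication by $h_1$ preserves the left coset $gH$, but left multiplication by $h_2$ does not, so $g'$ only lies in the same \emph{double} coset $HgH$. In fact the set of bad left cosets is typically infinite: since $\tilde{f}(\widetilde{K})$ is $H$-invariant, if $gH$ is bad then so is $hgH$ for every $h\in H$, and these cosets are distinct as $h$ runs over representatives of $H/(H\cap gHg^{-1})$, which is usually an infinite set. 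So the claim ``the bad cosets form a finite set $\{g_1H,\dots,g_nH\}$'' fails as stated; what your compactness/proper discontinuity argument actually shows is that the set $B=\{g\in G: g\cdot\tilde{f}(F)\cap\tilde{f}(F)\neq\emptyset\}$ is finite, equivalently that there are finitely many bad double cosets.

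Fortunately the repair is immediate and leaves the rest of your proof intact: use LERF to find, for each $b\in B\setminus H$, a finite index $N_b\leq G$ with $H\leq N_b$ and $b\notin N_b$, and set $N=\bigcap_b N_b$. If some $g\in N\setminus H$ were bad, then $h_2^{-1}gh_1\in B\setminus H$ for suitable $h_1,h_2\in H$; but $h_1,h_2\in H\subseteq N$ and $g\in N$ force $h_2^{-1}gh_1\in N$, contradicting the choice of $N$. (This is exactly the point that separating a single representative kills its entire double coset once $H\leq N$.) Hence no element of $N\setminus H$ translates $\tilde{f}(\widetilde{K})$ back onto itself, your injectivity criterion applies to the cover $X'=\widetilde{X}/N$, and compactness of $K$ upgrades the injective lift to an embedding as you say. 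With this double-coset correction the proof is complete and is the standard argument the paper has in mind.
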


Lemma~\ref{lem:lerf} is standard and we do not provide a proof here. A famous result of P. Scott~\cite{Scott} says that closed surface groups are LERF.

Let $E\cong S^1\vee S^1$ be a figure eight and let $S$ be a surface as before. A map $f\colon E\to S$, is called \emph{$\pi_1$--injective} if $f$ induces an injective map on fundamental groups and if $f$ is a local homeomorphism.

\begin{lem}\label{lem:figure8}
Let $\gamma\subset S$ be a closed geodesic with at least one self--intersection. Then $\gamma$ is the image of a $\pi_1$--injective map $f\colon E\to S$.
\end{lem}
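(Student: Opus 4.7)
The plan is to build $f$ directly. Pick a self-intersection point $p$ of $\gamma$ and, using the primitive parametrization, write $\gamma\colon[0,L]\to S$ with $\gamma(0) = \gamma(L) = p$ and $\gamma(t_0) = p$ for some $0 < t_0 < L$. Set $\alpha = \gamma|_{[0,t_0]}$ and $\beta = \gamma|_{[t_0,L]}$, each regarded as a loop based at $p$, and define $f\colon E \to S$ to send the two circles of $E$ to $\alpha$ and $\beta$; by construction the image is exactly $\gamma$. That $f$ is a local homeomorphism onto its image is immediate: on the interior of each edge of $E$, $f$ agrees with the smooth geodesic immersion of $\gamma$, and at the wedge point the four edge-directions are sent to the four tangent half-directions of the two branches of $\gamma$ at $p$, which are distinct by transversality of the self-intersection.

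The substantive task is to show that $f_*\colon\pi_1(E)\to\pi_1(S)$ is injective, that is, that $\alpha$ and $\beta$ generate a copy of $F_2$ in $\pi_1(S)$. I would begin by showing that they do not commute. If they did, then since centralizers in a torsion-free closed surface group are cyclic, there would exist $\delta\in\pi_1(S)$ and integers $p',q'$ with $\alpha = \delta^{p'}$ and $\beta = \delta^{q'}$; primitivity of $\gamma = \alpha\beta = \delta^{p'+q'}$ then forces $p'+q' = \pm 1$. Both exponents are nonzero, because neither sub-loop is null-homotopic (each lifts to a geodesic arc of positive length in $\mathbb{H}^2$). The crucial geometric observation is that the two branches of $\gamma$ at $p$ meet transversely, so neither $\alpha$ nor $\beta$ is smooth at $p$; hence neither is a closed geodesic, and each strictly exceeds the length of the geodesic representative of its free homotopy class: $L_\alpha > |p'|\ell(\gamma)$ and $L_\beta > |q'|\ell(\gamma)$, where $L_\alpha, L_\beta$ denote the sub-arc lengths. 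Summing these strict inequalities and using $L_\alpha + L_\beta = \ell(\gamma)$ together with the triangle inequality $|p'+q'| \le |p'|+|q'|$ gives $\ell(\gamma) > \ell(\gamma)$, a contradiction.

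With non-commutativity of $\alpha$ and $\beta$ established, I would finish by invoking the standard fact that any two-generated non-abelian subgroup of a closed surface group of genus at least two is free of rank two -- a consequence, for example, of surface groups being fully residually free limit groups, combined with the cyclicity of their abelian subgroups. This final appeal is the step I expect to be the principal obstacle to a fully self-contained proof. A more hands-on alternative would be to lift $f$ equivariantly to a map from the $4$-valent tree into $\mathbb{H}^2$ and apply a hyperbolic local-to-global argument, using the positive bending angles at the wedge vertices to check directly that the lift is an embedded tree; but invoking the structural fact about surface groups is considerably cleaner.
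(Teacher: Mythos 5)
Your construction is the same as the paper's: cut $\gamma$ at a self--intersection point into the two subloops it determines and show that these generate a rank two free subgroup of $\pi_1(S)$, so the proposal is correct and follows essentially the same approach. The only difference is that where the paper simply asserts freeness ``since $\gamma$ is a geodesic and the two loops are distinct,'' you supply a justification (non--nullhomotopy via lifting, the corner/length argument ruling out commutation, and the standard fact that a two--generated non--abelian subgroup of a closed surface group is free of rank two), which is a sound filling--in of that step rather than a genuinely different route.
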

\begin{proof}
Choose an arbitrary orientation on $\gamma$ and an arbitrary self--intersection point $p\in\gamma$. In a small neighborhood $U$ of $p$, we have that $\gamma\cap U$ is a pair of transversely intersecting oriented arcs $\alpha$ and $\beta$. We start at $p$ and travel along $\alpha$ in the direction of the orientation. We follow $\gamma$ until we return to $p$ for the first time. Note that this will happen along the arc $\beta$, since geodesics are uniquely determined by a point and a direction. We denote the resulting loop on $S$ by $\gamma_1$. Continuing from $p$ along $\beta$, we eventually return to $p$ for a second time along $\alpha$, tracing another closed loop $\gamma_2$. We have thus decomposed $\gamma$ as a union of two distinct closed curves on $S$ which meet at $p$. Since $\gamma$ is a geodesic and since $\gamma_1$ and $\gamma_2$ are distinct loops based at $p$, we have that the homotopy classes of $\gamma_1$ and $\gamma_2$ generate a free subgroup of $\pi_1(S,p)$. Therefore, $\gamma=\gamma_1\cup\gamma_2$ is the image of a $\pi_1$--injective map $E\to S$ sending the wedge point of $E$ to $p$.
\end{proof}

We have the following lemma which implies Lemma~\ref{lem:P LERF}:

\begin{lem}\label{lem:P LERF2}
Let $\gamma\subset S$ be a closed geodesic with at least one self--intersection, and let $f\colon E\to S$ be a $\pi_1$--injective map with image $\gamma$. Then there is a finite cover $S'\to S$ and a lift $f'\colon E\to S'$ of $f$ which is an embedding. In particular, $\gamma$ lifts to a figure eight loop in $S'$.
\end{lem}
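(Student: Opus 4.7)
The plan is to invoke Lemma~\ref{lem:lerf}, available since $\pi_1(S)$ is LERF by Scott's theorem. The content of the argument is to verify the hypothesis: that for some simplicial structure making $f$ simplicial, the lift $\tilde{f}\colon\widetilde{E}\to\widetilde{S}$ to universal covers is an embedding.

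Rather than wrestle directly with the tree $\widetilde{E}$ mapping into $\widetilde{S}\cong\mathbb{H}^2$, I would thicken $E$ to a surface. Equip $S$ with a hyperbolic metric, and consider $H:=f_*\pi_1(E,p)\cong F_2$, a finitely generated, purely hyperbolic Fuchsian subgroup of $\pi_1(S)$. The cover $\widehat{S}:=\widetilde{S}/H$ has a compact convex core $C$ (a pair of pants or a once-holed torus) with $\pi_1(C)=F_2$. Take $K$ to be a compact surface homeomorphic to $C$, containing $E$ as a spine, and extend $f$ to a $\pi_1$-injective map $\bar{f}\colon K\to S$ with $\bar{f}|_E=f$. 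By quasi-convexity of $H$ in the Fuchsian group $\pi_1(S)$, after homotoping $\bar{f}$ rel $E$ if necessary, the universal cover lift $\widetilde{\bar{f}}\colon\widetilde{K}\to\widetilde{S}$ becomes a proper embedding onto the convex hull in $\mathbb{H}^2$ of the limit set of $H$. Note that vertex-level injectivity is in any case automatic from $\pi_1$-injectivity plus the free action of $\pi_1(S)$ on $\widetilde{S}$, and the four half-edges at each vertex of $\widetilde{E}$ leave in four distinct tangent directions by the transverse self-intersection at $p$ argued in Lemma~\ref{lem:figure8}.

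Lemma~\ref{lem:lerf} applied to $\bar{f}$ now produces a finite cover $S'\to S$ and an embedded lift $\bar{f}'\colon K\to S'$; the restriction $f':=\bar{f}'|_E\colon E\to S'$ is then the sought embedded lift of $f$. Its image is a wedge of two embedded closed loops meeting at a single point with four distinct tangent directions, assembling into a single closed geodesic in $S'$ with exactly one transverse self-intersection — a figure eight loop. The hardest step is the thickening: arranging $\bar{f}$ with $\bar{f}|_E=f$ (so that restricting $\bar{f}'$ produces a genuine lift of $f$, not merely of a map homotopic to $f$) while the universal cover lift $\widetilde{\bar{f}}$ remains an embedding. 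The rel-$E$ homotopy needed to make $\widetilde{\bar{f}}$ embed onto the convex hull must be performed compatibly with the given $f$; this is feasible because $f$ already sends the petals of $E$ to geodesic arcs and hence is in an especially rigid position.
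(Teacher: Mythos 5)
Your overall plan---verify the hypothesis of Lemma~\ref{lem:lerf} and conclude via Scott's theorem---is the paper's plan, but the detour through the thickened surface $K$ does not buy you the verification; it quietly assumes it. To place $E$ inside $K\cong C$ as a spine with $\bar f|_E=f$, you need the lift $\hat f\colon E\to \widehat S=\widetilde S/H$ to be injective; and since $\pi_1(S)$ acts freely on $\widetilde S$ and $\widetilde f$ is equivariant with respect to $f_*$, injectivity of $\hat f$ is \emph{equivalent} to injectivity of $\widetilde f\colon\widetilde E\to\widetilde S$, which is exactly the hypothesis you set out to avoid checking. The ``homotope $\bar f$ rel $E$'' step cannot create this injectivity either: a homotopy rel $E$ leaves $\widetilde{\bar f}|_{\widetilde E}=\widetilde f$ unchanged, and any embedding $\widetilde{\bar f}$ restricts to an embedding of $\widetilde E$, so $\widetilde f$ must already embed before any thickening or homotopy is performed. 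That claim is the actual content of the lemma; the paper states it directly for $E$ (deducing it from the fact that $\gamma$ is a geodesic, so edges of $\widetilde E$ map to geodesic segments) and then applies Lemma~\ref{lem:lerf} to $f\colon E\to S$ itself, with no thickening at all.

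Your proposal never supplies an argument for this point. The two observations you do make---vertices inject because the deck action is free, and the four half-edges at a vertex leave in distinct directions---only exclude failures of injectivity at and near the vertices of $\widetilde E$. The genuine danger is that two edge \emph{interiors} of $\widetilde E$ have crossing images, i.e.\ that a self-intersection of $\gamma$ other than the chosen point $p$ survives in the cover $\widehat S$; neither quasi-convexity of $H$ (which gives a quasi-isometric/proper embedding statement, not injectivity of this particular map) nor the closing remark that $f$ is ``in an especially rigid position'' addresses this. Once injectivity of $\widetilde f$ is actually established, the surface $K$ is superfluous: apply Lemma~\ref{lem:lerf} to $E$ directly, as the paper does.
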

\begin{proof}
Let $p$ be a self--intersection of $\gamma$, which we use as a basepoint for $\pi_1(S)$. We have that $\pi_1(E)\cong F_2$, and we observed that $f_*(\pi_1(E))$ is a copy of $F_2<\pi_1(S)$. Moreover, it is easy to check that since $\gamma$ is a geodesic, the map $\yt{f}\colon \yt{E}\to\yt{S}$ induced on universal covers is an embedding. The claim of the lemma follows from Scott's Theorem that surface groups are LERF.
\end{proof}

\section{An effective version of the main result}
In this section, we give effective estimates of results relevant to Theorem~\ref{thm:main}, thus establishing Corollary \ref{cor:algorithm}.

\subsection{Proper powers}

It is advantageous for us to assume that a given element $\gamma\in\pi_1(S)$ is not a proper power. In order to legitimize this assumption, we have the following lemma which is easy to prove using the following three well--known facts:

\begin{enumerate}
\item
If $1\neq\gamma\in\pi_1(S)$ then the centralizer of $\gamma$ is cyclic.
\item
The word growth with respect to any finite generating set is exponential.
\item
The Dehn function of $\pi_1(S)$ is linear.
\end{enumerate}

\begin{lem}\label{lem:proper power}
There is an algorithm which decides if $\gamma\in\pi_1(S)$ is a proper power, which has at most exponential complexity in $\ell_X(\gamma)$.
\end{lem}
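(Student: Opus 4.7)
The plan is to brute-force enumerate candidate roots $\gamma_0$ and exponents $n$, checking each using the decidable word problem in $\pi_1(S)$. By fact (3), $\pi_1(S)$ is word hyperbolic, and Dehn's algorithm solves the word problem in polynomial time in the input length. Cocompactness of the $\pi_1(S)$-action on $\bH^2$ yields a uniform positive lower bound $c_0$ on the stable translation length of every non-identity element (essentially the systole of $S$). Combined with the quasi-isometry between $(\pi_1(S),\ell_X)$ and $\bH^2$, one obtains constants $A,B>0$ such that $\ell_X(g^n)\geq An-B$ for every non-identity $g\in\pi_1(S)$ and every $n\geq 1$. Consequently, if $\gamma=\gamma_0^n$ with $\gamma_0\neq 1$ and $n\geq 2$, then $n\leq (\ell_X(\gamma)+B)/A$, so the exponent is linearly bounded in $\ell_X(\gamma)$.

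By fact (1), any root of $\gamma$ lies in the cyclic centralizer $C(\gamma)=\langle\delta\rangle$, where $\delta$ is the primitive root; hence $\gamma$ is a proper power iff $\gamma=\delta^k$ for some $k\geq 2$, and every decomposition $\gamma=\gamma_0^n$ has $\gamma_0=\delta^{k/n}$. A hyperbolic-geometric argument then supplies a constant $C_0$ such that, whenever $\gamma$ is a proper power, at least one such root $\gamma_0$ satisfies $\ell_X(\gamma_0)\leq \ell_X(\gamma)+C_0$. The intuition is that a power grows linearly in length at a uniform rate, so the primitive root cannot be drastically longer than $\gamma$ itself.

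The algorithm then reads: given $\gamma$ with $L=\ell_X(\gamma)$, enumerate all words $w$ in $X$ of length at most $L+C_0$; for each such $w$ and each integer $n\in\{2,\ldots,\lceil (L+B)/A\rceil\}$, use Dehn's algorithm to test whether $w^n=\gamma$ in $\pi_1(S)$. Return ``yes'' iff some pair $(w,n)$ succeeds. By fact (2) the number of candidate words is $\exp(O(L))$, and each test takes polynomial time, so the total running time is exponential in $L$.

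The main obstacle is justifying the existence of $C_0$: word length in $\pi_1(S)$ is not conjugation-invariant, so in principle the primitive root could have much larger word length than $\gamma$. A clean workaround is to first replace $\gamma$ by a conjugate of minimum word length in its conjugacy class, which is computable in polynomial time via the conjugacy problem for hyperbolic groups, and to observe that proper-powerhood is a conjugacy invariant. After this reduction $\ell_X$ is comparable to the stable translation length $\tau$ on $\gamma$, so the shortest conjugate of $\delta$ has length at most roughly $\tau(\gamma)/k \lesssim \ell_X(\gamma)/k$, and the required bound $\ell_X(\gamma_0)\lesssim \ell_X(\gamma)$ follows from the uniform positivity of the systole together with the subadditivity of word length along the axis. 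Enumeration over cyclic shifts in the algorithm absorbs the residual conjugation.
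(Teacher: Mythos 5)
Your argument is correct and follows exactly the route the paper intends: the paper omits the proof and merely lists the three facts (cyclic centralizers, exponential growth, linear Dehn function/hyperbolicity), and your brute-force search over bounded roots and exponents, verified by the fast word problem, fills in those details. The only fragile point is the additive bound $\ell_X(\gamma_0)\leq\ell_X(\gamma)+C_0$, which is more than you need: since a root shares the axis of $\gamma$ in $\bH^2$ and has smaller translation length, its displacement of a basepoint is at most that of $\gamma$, so Milnor--\v{S}varc gives a linear bound on $\ell_X(\gamma_0)$, which still yields only exponentially many candidates and hence the same complexity.
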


\subsection{Hyperbolic length versus word length}

The following lemma is an easy consequence of the standard fact that $\pi_1(S)$ is quasi-isometric to hyperbolic space, and we omit its proof:

\begin{lem}\label{lem:qi}
Let $\gamma\in\pi_1(S)$, and assume that $\gamma$ is not a proper power. Let $\ell_{\gamma}$ denote the length of a geodesic representative for $\gamma$, and let $\ell_X(\gamma)$ denote the length of $\gamma$ in the generating set $X$. Then for some constant $\lambda=\lambda(X,S)>0$, we have $\ell_{\gamma}\leq \lambda\cdot\ell_X(\gamma)$.
\end{lem}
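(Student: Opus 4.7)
The plan is to deduce the inequality from the Milnor--\v{S}varc lemma applied to the action of $\pi_1(S)$ on the universal cover $\bH^2$ of $S$ by deck transformations, together with the standard identification of closed geodesic length with translation length.

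First, I would fix the hyperbolic metric on $S$ and lift it to $\bH^2$, together with a lift $\tilde{p}\in\bH^2$ of the basepoint. Since $S$ is closed, the deck action of $\pi_1(S)$ on $\bH^2$ is by isometries, is free and properly discontinuous, and has compact quotient. The Milnor--\v{S}varc lemma then asserts that the orbit map $g\mapsto g\cdot\tilde{p}$ is a quasi-isometry between $(\pi_1(S),\ell_X)$ and $(\bH^2,d_{\bH^2})$. In particular there exist constants $A=A(X,S)>0$ and $B=B(X,S)\geq 0$, depending only on the generating set $X$ and the chosen hyperbolic metric on $S$, such that
\[
d_{\bH^2}(\tilde{p},g\cdot\tilde{p}) \leq A\cdot\ell_X(g) + B
\]
for every $g\in\pi_1(S)$.

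Second, I would recall that for any $1\neq\gamma\in\pi_1(S)$, the element $\gamma$ acts on $\bH^2$ as a hyperbolic isometry, and its translation length $\tau(\gamma):=\inf_{x\in\bH^2} d_{\bH^2}(x,\gamma x)$ is precisely the length of the unique closed geodesic in the free homotopy class of $\gamma$, i.e.\ $\tau(\gamma)=\ell_{\gamma}$. Since translation length is bounded above by any particular displacement, we obtain
\[
\ell_{\gamma} \;=\; \tau(\gamma) \;\leq\; d_{\bH^2}(\tilde{p},\gamma\cdot\tilde{p}) \;\leq\; A\cdot\ell_X(\gamma)+B.
\]

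Finally, since $\gamma\neq 1$ implies $\ell_X(\gamma)\geq 1$, the additive constant can be absorbed by setting $\lambda:=A+B$, yielding $\ell_{\gamma}\leq \lambda\cdot\ell_X(\gamma)$. The hypothesis that $\gamma$ is not a proper power is not logically needed for this argument, but is the natural setting in which the lemma will be applied in the algorithm, since in that case $\ell_\gamma$ is also the length of a \emph{primitive} closed geodesic. There is no real obstacle here; the only step requiring any care is verifying that Milnor--\v{S}varc produces constants that depend on $(X,S)$ alone and not on $\gamma$, which is immediate from the statement.
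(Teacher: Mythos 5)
Your proof is correct and follows exactly the route the paper indicates: the quasi-isometry between $(\pi_1(S),\ell_X)$ and $\bH^2$ furnished by Milnor--\v{S}varc, combined with the identification of the closed geodesic length with the translation length of the corresponding hyperbolic isometry, and absorption of the additive constant using $\ell_X(\gamma)\geq 1$. The paper omits the argument precisely because it is this standard deduction, so there is nothing to add; your observation that the proper-power hypothesis is not needed for the inequality itself is also accurate.
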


\subsection{Length and intersection}

It is a standard fact from surface topology that curves of a given length have self--intersection number bounded above by a quadratic function in the length. Precisely, we have the following:

\begin{lem}\label{lem:intersect}
Let $X$ be a finite generating set for $\pi_1(S)$ and let $\gamma\in\pi_1(S)$. Then there is a constant $C=C(X,S)$ such that the geodesic representative for $\gamma$ has at most $C\cdot (\ell_X(\gamma))^2$ self--intersections.
\end{lem}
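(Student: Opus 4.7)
The plan is to combine Lemma~\ref{lem:qi} (passing from word length to hyperbolic length) with the classical quadratic bound on self-intersections of a closed geodesic in terms of its hyperbolic length. Write $\ell_\gamma$ for the length of the geodesic representative of $\gamma$ on $S$. By Lemma~\ref{lem:qi}, $\ell_\gamma\leq \lambda\cdot\ell_X(\gamma)$ for a constant $\lambda=\lambda(X,S)$, so it suffices to establish a bound of the form $C'\cdot \ell_\gamma^2$ on the number of self-intersections for some $C'=C'(S)$.

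To obtain such a bound, fix $\rho>0$ strictly smaller than the injectivity radius of $S$, and partition the geodesic representative (parametrized by arc length) into $N=\lceil \ell_\gamma/\rho\rceil$ consecutive sub-arcs $\gamma_1,\dots,\gamma_N$ of length at most $\rho$. Each $\gamma_i$ lifts isometrically to a geodesic segment in the universal cover $\bH^2$, and is in particular embedded in $S$. I would then bound, by a constant $K=K(S,\rho)$ independent of the arcs, the number of intersection points between any fixed pair $\gamma_i,\gamma_j$. After choosing a lift $\widetilde\gamma_i\subset \bH^2$, each such intersection corresponds to a translate $g\cdot\widetilde\gamma_j$ meeting $\widetilde\gamma_i$ for some $g\in\pi_1(S)$; two distinct geodesics in $\bH^2$ meet at most once, so each such translate contributes at most one intersection. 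Since both $\widetilde\gamma_i$ and each translate of $\widetilde\gamma_j$ have diameter at most $\rho$, any contributing translate must lie entirely in the $\rho$-neighborhood of $\widetilde\gamma_i$, a set of bounded hyperbolic diameter. Cocompactness of the $\pi_1(S)$-action on $\bH^2$ then yields the desired uniform bound on the number of admissible translates.

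Summing over the at most $\binom{N}{2}$ pairs of sub-arcs produces a total of at most $K\cdot\binom{N}{2}=O(\ell_\gamma^2)$ self-intersections, and combining with the inequality $\ell_\gamma\leq \lambda\cdot\ell_X(\gamma)$ gives the desired constant $C=C(X,S)$. The main obstacle is the uniform packing estimate in the previous paragraph: one must verify that the number of translates of a short arc in $\bH^2$ which meet another short arc is bounded purely in terms of $S$ and $\rho$, not in terms of which sub-arcs of $\gamma$ one chose. This is standard and follows from the fact that $\pi_1(S)$ acts properly discontinuously and cocompactly on $\bH^2$, but it is the only genuine geometric input beyond the straightforward length count.
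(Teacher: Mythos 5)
Your argument is correct, and it is worth noting that the paper itself does not prove this lemma at all: it cites Malestein--Putman (\cite{MalesteinPutman}, Lemma 3.1) and explicitly omits the proof as ``standard combinatorial methods.'' Your proof supplies exactly the kind of argument being alluded to, in a hyperbolic-geometry rather than combinatorial model: subdivide the geodesic into $N=O(\ell_\gamma)$ arcs of length below the injectivity radius, bound the crossings of each pair of arcs by a uniform constant via the packing/proper-discontinuity estimate in $\bH^2$ (orbit points of a fixed point are separated by at least the systole, so only boundedly many translates $g\widetilde\gamma_j$ can meet the $\rho$-neighborhood of $\widetilde\gamma_i$), and sum over $\binom{N}{2}$ pairs; the injectivity of ``intersection point $\mapsto$ translate'' is justified by the fact that distinct complete geodesics in $\bH^2$ cross at most once. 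Malestein--Putman instead work with a polygonal/combinatorial decomposition adapted to the generating set, which gives the bound directly in terms of word length without passing through the hyperbolic metric; your route buys a cleaner geometric picture at the cost of invoking Lemma~\ref{lem:qi}. One small caveat on that invocation: Lemma~\ref{lem:qi} is stated under the hypothesis that $\gamma$ is not a proper power, whereas Lemma~\ref{lem:intersect} has no such hypothesis. This is harmless because the only direction you use, $\ell_\gamma\leq\lambda\cdot\ell_X(\gamma)$, is the trivial one: a based representative of $\gamma$ of word length $\ell_X(\gamma)$ is a concatenation of that many generator loops of uniformly bounded hyperbolic length, and the closed geodesic in the free homotopy class is no longer than this representative. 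You should say this (or restrict the appeal to Lemma~\ref{lem:qi} accordingly) rather than quote a lemma whose hypotheses you have not assumed; with that adjustment the proof is complete.
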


Lemma \ref{lem:intersect} is proved in a slightly different form by Malestein--Putman (See~\cite{MalesteinPutman}, Lemma 3.1). More precise self--intersection bounds are also studied in~\cite{CohenLustig},~\cite{ChasPhillips1}, and~\cite{ChasPhillips2}. Since Lemma~\ref{lem:intersect} follows from standard combinatorial methods, we will omit a proof.

\subsection{Effective LERF}

P. Patel has established effective versions of Scott's Theorem that closed surface groups are LERF.
To state the effective version, let $S$ be a compact surface of negative Euler characteristic. Following Patel, we give $S$ the \emph{standard} hyperbolic metric, one which comes from a tiling by right-angled hyperbolic pentagons. If $H<\pi_1(S)$ is an infinite index, finitely generated subgroup, we let $X\to S$ be the corresponding cover and $C(X)$ the \emph{convex core} of $X$. That is to say, $C(X)$ is the smallest, closed, convex subsurface of $X$ with geodesic boundary for which the inclusion into $X$ is a homotopy equivalence. Note that $H$ is a finitely generated free group of rank $n$. Let $\beta$ be the total length of the geodesic boundary of $C(X)$, which is finite since $H$ has finite rank.

\begin{thm}[\cite{Patel}, Theorem 7.1]\label{thm:patel}
Suppose $n\geq 2$, let $\gamma\in\pi_1(S)\setminus H$, and let $\ell_{\gamma}$ be the length of the geodesic representative of $\gamma$. There exists a finite index subgroup $K<\pi_1(S)$ containing $H$ but not $\gamma$ such that \[[\pi_1(S):K]<4n-4+\frac{2\sinh[d\cdot (\ell_{\gamma}/e+2)]}{\pi}\cdot \beta,\] where $d$ and $e$ are fixed positive constants.
\end{thm}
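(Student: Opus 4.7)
The plan is to quantify P. Scott's original geometric proof of subgroup separability for surface groups, working throughout in the standard hyperbolic metric so that the right-angled pentagonal tiling allows counting areas as counting tiles. The key numerical input is that every right-angled hyperbolic pentagon has area exactly $\pi/2$ by Gauss--Bonnet, so the degree of a finite cover of $S$ is the ratio of its total pentagon count to that of $S$, which is what makes the area bounds sharp enough to produce the stated index.

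First I would take the cover $X \to S$ corresponding to $H$ and its convex core $C(X)$. Since $H$ is free of rank $n$ and $C(X)$ has geodesic boundary, Gauss--Bonnet gives $\text{Area}(C(X)) = 2\pi(n-1)$, so $C(X)$ consists of exactly $4n-4$ pentagons of the lifted tiling, which accounts for the first term of the bound. Because $\gamma \notin H$, the geodesic representative of $\gamma$ lifts to an arc $\tilde\gamma$ in $X$ which does not close up at the basepoint. Next I would enlarge $C(X)$ to a convex subsurface $Y \subset X$ whose $R$-neighborhood contains all of the relevant portion of $\tilde\gamma$; the radius $R$ must grow linearly in $\ell_\gamma$, with slope and intercept dictated by the pentagonal geometry --- these provide the constants $d$ and $e$. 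The enlargement is done pentagon by pentagon so that $Y$ is a union of whole pentagons with closed geodesic boundary. Since the area of the $R$-neighborhood of a geodesic of length $L$ in $\mathbb{H}^2$ equals $2L\sinh R$, adjoining the required tiles to cover $N_R(C(X))$ inflates the area of $C(X)$ by at most $\beta \sinh R$, i.e.\ by at most $2\beta\sinh R/\pi$ extra pentagons, which accounts for the second term of the bound.

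To finish, observe that $\partial Y$ is a disjoint union of closed geodesics on $X$, each projecting to a closed geodesic on $S$. One can therefore glue finitely many copies of $Y$ along matching boundary components (for example, by doubling) to assemble a closed surface $S'$ admitting a finite covering map $S' \to S$ that restricts to the identity on the embedded copy of $Y$. Setting $K = \pi_1(S')$, we have $H < K$ because $C(X) \subset Y \subset S'$ is $\pi_1$-injective with image $H$, and $\gamma \notin K$ because no lift of $\gamma$ closes up inside $Y$ by the choice of $R$. The index $[\pi_1(S):K]$ equals the pentagon count of $Y$, giving the stated inequality.

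The main obstacle is the pentagon-by-pentagon thickening step: one must choose an enlargement $Y \supset C(X)$ which simultaneously has closed geodesic boundary adapted to the tiling, has area close to that of $N_R(C(X))$ (so the second term does not pick up a large multiplicative loss), and still separates $\gamma$ from $H$ in the strong sense that no lift of $\gamma$ closes up in $Y$. Controlling these three constraints compatibly, and extracting the explicit constants $d$ and $e$ from the geometry of the right-angled pentagon, is the technical heart of Patel's refinement of Scott's argument; the rest of the proof is essentially area bookkeeping with Gauss--Bonnet.
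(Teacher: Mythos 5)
This statement is not proved in the paper at all: it is quoted verbatim from Patel (\cite{Patel}, Theorem 7.1), so the only meaningful comparison is with Patel's own argument. Your outline does track that argument in its area-counting aspects: the standard pentagonal metric with tiles of area $\pi/2$, Gauss--Bonnet giving $\mathrm{Area}(C(X))=2\pi(n-1)$ and hence the $4n-4$ term, and a collar estimate of the form $\beta\sinh R$ with $R$ growing linearly in $\ell_\gamma$ accounting for the $\frac{2\sinh[d(\ell_\gamma/e+2)]}{\pi}\cdot\beta$ term. That bookkeeping is faithful to Patel's quantification of Scott.

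The genuine gap is the completion step, which you dispatch with ``glue finitely many copies of $Y$ along matching boundary components (for example, by doubling) to assemble a closed surface $S'$ admitting a finite covering map $S'\to S$.'' This does not work as stated: the double of $Y$ carries no natural local isometry to $S$ (one would need an isometric involution of $S$ fixing the image of $\partial Y$), and the assertion that some finite collection of copies of $Y$ can be glued into a finite cover of $S$ containing $Y$ embedded is exactly Scott's key lemma, not a routine construction --- it is the engine of the whole proof and cannot be assumed. Scott's (and Patel's) actual mechanism is the ambient right-angled Coxeter group $G$ generated by reflections in the sides of the pentagon, in which $\pi_1(S)$ sits with finite index. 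Given the convex tiled subsurface $Y\subset X=\mathbb{H}^2/H$, one forms $K'=\langle H,\ \text{reflections in the walls bounding a lift of } Y\rangle$; convexity of $Y$ and a Poincar\'e-type argument identify $\mathbb{H}^2/K'$ with the mirrored orbifold $Y$, so that $[G:K']$ equals the pentagon count of $Y$ and $\gamma\notin K'$, and one then takes $K=K'\cap\pi_1(S)$. Without this reflection-group completion (or an equivalent statement of Scott's lemma), the area estimates never produce a subgroup at all. Relatedly, convexity of $Y$ is doing more work than your phrase ``no lift of $\gamma$ closes up inside $Y$'' suggests: it is what forces $\gamma\notin K'$ and what makes the index equal to the tile count, and the tile-by-tile thickening must be carried out so as to preserve it. So the technical heart is not only the thickening you flag, but the completion step your sketch replaces with an unjustified gluing.
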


\subsection{Making Theorem~\ref{thm:main} effective}

Let $1\neq\gamma\in\pi_1(S)$ be given and let $X$ be a fixed finite generating set for $\pi_1(S)$. We can effectively check if $\gamma$ is a proper power by Lemma~\ref{lem:proper power}, and replace it by a root if necessary. Lemma~\ref{lem:intersect} combined with Theorem~\ref{thm:patel} together can be used to give effective control over the degree of a cover $S'\to S$ such that $\gamma$ lifts to a figure eight loop in $S'$, provided $\gamma$ is non--simple. Thus if $\gamma$ lifts to a figure eight loop on some finite cover of $S$ then it does so on one of finitely many covers $\{S_1,\ldots,S_N\}$ of $S$, where the degree of each such cover is bounded by a computable function in $\ell_X(\gamma)$.

Lemma~\ref{lem:KobSanth} effectively computes a representation $\rho_i$ of $\pi_1(S_i)$ for each $i$, where all simple elements of $\pi_1(S_i)$ act with finite order but where $\gamma$ acts with infinite order whenever $\gamma$ lifts to a figure eight loop on $S_i$.  Finite induction as in Lemma~\ref{lem:induction} is evidently computable. Finally, there are only finitely many elements of $\pi_1(S)$ of a given length. We may therefore consider the direct sum of the representations $\{\rho_1,\ldots,\rho_N\}$ as above for each $\gamma\in\pi_1(S)$ of fixed length with respect to $X$. We thus produce a representation $\rho$ of $\pi_1(S)$ which detects the non--simple elements of $\pi_1(S)$ of length $\ell_X(\gamma)$ as precisely the ones with infinite order under $\rho$. This establishes Corollary~\ref{cor:algorithm}.

\section{Remarks on the AMU Conjecture for surface groups}

In this final section, we explain the motivation behind the work in this paper. The starting point for us is to use certain representations of surface groups under which figure eight loops have infinite order, and under which simple loops have finite order. In order to prove the results of this paper, we only need the existence of one such representation. However, each representation used here is part of an infinite sequence of representations $$\rho_{p}\colon \pi_1(S) \to \mathrm{PGL}_{d_p}(\C)$$ indexed by odd integers $p$, as considered in~\cite{KoberdaSantharoubane}. For the present work, we only need the following precise statement: if $\gamma$ is geodesic figure eight loop, then $\rho_p(\gamma)$ has infinite order for $p$ big enough (cf. Lemma \ref{lem:KobSanth}).  In general, it is expected that any non--simple element of $\pi_1(S)$ which is not a proper power will have infinite order under $\rho_p$ for all but finitely many values of $p$. More precisely :
\begin{con}[AMU Conjecture for surface groups] \label{AMU2}
If $1\neq \gamma \in \pi_1(S)$ is a non--simple element which is not a proper power then $\rho_p(\gamma)$ has infinite order for $p\gg 0$.
\end{con}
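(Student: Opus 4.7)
The plan is to attack Conjecture~\ref{AMU2} by reducing it to the AMU conjecture for mapping class groups via the Birman exact sequence. There is a point-pushing embedding $\pi_1(S) \hookrightarrow \Mod(S_{g,1})$ sending $\gamma$ to a mapping class $P_\gamma$, and a theorem of Kra characterizes $P_\gamma$ as pseudo-Anosov precisely when the free homotopy class of $\gamma$ fills $S$. The representations $\rho_p$ of Lemma~\ref{lem:KobSanth} are constructed in~\cite{KoberdaSantharoubane} as the restrictions of quantum $\mathrm{SO}(3)$ representations of $\Mod(S_{g,1})$ along this embedding, so that $\rho_p(\gamma)$ is, up to a scalar, the image of $P_\gamma$ in the quantum representation. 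Under this dictionary, Conjecture~\ref{AMU2} for filling, non-proper-power $\gamma$ becomes the AMU conjecture restricted to the family of point-pushing pseudo-Anosovs. The non-filling case, where $\gamma$ is non-simple but carried by a proper subsurface $S_0 \subsetneq S$, should be handled by induction on the complexity of the carrier subsurface, reducing to filling curves on a surface of smaller Euler characteristic.

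First I would set up the precise dictionary between the infinite-order problem for $\rho_p(\gamma)$ in $\PGL_{d_p}(\C)$ and the infinite-order problem for $P_\gamma$ in the quantum $\mathrm{SO}(3)$ representation of $\Mod(S_{g,1})$, and verify its compatibility with Kra's classification. Next I would attempt to apply known partial results on AMU to point-pushing pseudo-Anosovs; these form a highly structured family, namely the kernel of the forgetful map $\Mod(S_{g,1}) \to \Mod(S_g)$, so one can hope for case-specific control via explicit computation in the Kauffman bracket skein algebra of the thickened surface. For the non-filling non-simple case, I would reduce to the carrier subsurface $S_0$ of $\gamma$ using functoriality properties of the TQFT under subsurface inclusions, and apply the filling result on $S_0$.

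A complementary, more elementary approach stays within the spirit of this paper. One inducts on the self-intersection number $k$ of the geodesic representative of $\gamma$; the base case $k=1$ is Lemma~\ref{lem:KobSanth}. For the inductive step one smooths a single self-intersection of $\gamma$ using the Kauffman bracket skein relation, expressing $\rho_p(\gamma)$ as a linear combination of operators associated to curves of smaller self-intersection. The obstacle here is that the two resolutions of a crossing need not themselves be non-simple, and simple or reducible summands may cancel with each other only conditionally; one would need an invariant that propagates cleanly across the skein expansion, perhaps the asymptotic dimension of $\mathrm{span}\{\rho_p(\gamma^n):n\geq 1\}$ as $p \to \infty$, to prevent the operator from collapsing to finite order.

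The main obstacle is that any successful proof must confront a genuine case of the open AMU conjecture. Reducing to point-pushing pseudo-Anosovs does not bypass this difficulty: AMU is known only in a handful of low-complexity examples, and point-pushing pseudo-Anosovs in genus $g \geq 2$ are not among the cases settled. The skein-theoretic induction must similarly navigate the fact that non-simplicity is not preserved under crossing resolution, so one cannot peel off self-intersections one at a time. Substantial progress on Conjecture~\ref{AMU2} therefore seems to require either fundamentally new input into the AMU conjecture itself, or a specifically surface-group-theoretic insight that distinguishes filling from non-filling curves at the level of the quantum traces of $\rho_p$.
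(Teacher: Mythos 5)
The statement you were asked to prove is a \emph{conjecture}, and the paper does not prove it: the only content the paper attaches to Conjecture~\ref{AMU2} is the observation that it follows from the AMU Conjecture for mapping class groups (Conjecture~\ref{AMU1}) by viewing $\pi_1(S)$ inside $\Mod^1(S)$ via the Birman exact sequence and invoking Kra's theorem that the point-pushing class of a non--simple, non--proper-power $\gamma$ is pseudo-Anosov on the subsurface filled by $\gamma$. Your first paragraph reconstructs exactly this reduction, and your closing paragraph honestly identifies the true state of affairs: any proof must resolve a genuinely open case of AMU, which neither you nor the paper does. So there is no hidden argument in the paper that you missed; your proposal is a correct account of the reduction plus a candid admission that the conjecture remains open, which matches the paper's own position.

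Two smaller points. First, your separate treatment of the non-filling case (induction on the carrier subsurface, TQFT functoriality under subsurface inclusions) is unnecessary for the reduction: Conjecture~\ref{AMU1} is stated for mapping classes having a pseudo-Anosov \emph{piece}, and Kra's theorem already delivers precisely that for every non--simple, non--proper-power $\gamma$, filling or not, so the non-filling case is absorbed into the same implication rather than requiring a genuinely harder functoriality argument (subsurface reductions of TQFT representations are themselves delicate). Second, it is worth noting how the paper sidesteps the conjecture for its actual results: rather than proving infinite order of $\rho_p(\gamma)$ for the quantum representations themselves, it uses LERF (Lemma~\ref{lem:P LERF}) to lift $\gamma$ to a figure eight loop on a finite cover, applies Lemma~\ref{lem:KobSanth} there, and induces back up. This proves Theorem~\ref{thm:main} with a \emph{different} family of representations and leaves Conjecture~\ref{AMU2} untouched; your skein-theoretic induction on self-intersection number, as you note, founders on the fact that resolving a crossing does not preserve non-simplicity, so it too would require new input beyond what is currently known.
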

This conjecture is implied by the AMU Conjecture as stated by Andersen, Masbaum, and Ueno~\cite{AMU}. We will briefly explain how this works.

Let $x_0 \in S$ be a fixed marked point and let $\Mod^1(S)$ be the mapping class group of $S$ fixing the marked point $x_0$. The Witten--Reshetikhin--Turaev $\mathrm{SO}(3)$ topological quantum field theory gives a representation 
 $$\tilde{\rho}_{p} : \Mod^1(S) \to \mathrm{PGL}_{d_p}(\C)$$ for each odd integer $p \geq 3$. Using the Birman exact sequence, we can view $\pi_1(S)$ as a subgroup of $\Mod^1(S)$. This allows us to define a representation $\rho_p$ by restricting $\tilde{\rho}_p$ to $\pi_1(S)$. The AMU conjecture as stated in~\cite{AMU} reads as follows:
\begin{con}\label{AMU1}
If $\phi \in \Mod^1(S)$ has a pseudo-Anosov piece then $\tilde{\rho}_p(\phi)$ has infinite order for $p\gg 0$.

\end{con}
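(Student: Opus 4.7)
The plan is to attack Conjecture~\ref{AMU1} in two stages: first reduce an arbitrary $\phi \in \Mod^1(S)$ with a pseudo-Anosov piece to the case of a pure pseudo-Anosov on a subsurface, and then handle that case through the asymptotic analysis of curve operators on the TQFT state spaces. This broadly matches the strategy under which partial cases of AMU have been obtained in the literature.

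For the reduction, replace $\phi$ by a sufficient power so that the Nielsen--Thurston canonical reducing multicurve $\mathcal{C}$ is $\phi$-invariant and $\phi|_R$ is pseudo-Anosov on some component $R$ of $S\setminus\mathcal{C}$ with stretch factor $\lambda>1$, while being the identity on the other components and on a neighborhood of the marked point $x_0$ (after a further finite power). The Witten--Reshetikhin--Turaev functor is multiplicative under gluing along colored curves: the state space $V_p(S)$ decomposes as a direct sum, indexed by admissible colorings of the components of $\mathcal{C}$, of tensor products of state spaces of the cut pieces, and $\phi$ acts diagonally with respect to this decomposition. Because $\phi$ acts by the identity on tensor factors coming from the non-pA pieces, the order of $\tilde\rho_p(\phi)$ is controlled by the orders of its restriction to the pseudo-Anosov factor on $R$, so it suffices to prove the conjecture for a pure pseudo-Anosov.

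For the pure pseudo-Anosov case on $R$, I would invoke the curve-operator formalism of Bonahon--Wong combined with the semiclassical methods of Marché--Paul and Andersen. The image under $\tilde\rho_p$ of a simple multicurve is a Toeplitz-type operator whose principal symbol is the corresponding trace function on the $\mathrm{SL}_2(\C)$ character variety $X(R)$, and $\phi$ acts symplectically on $X(R)$ with hyperbolic dynamics governed by $\log\lambda$. An Egorov-type theorem conjugating $\tilde\rho_p(\phi)$ to this classical action in the $p\to\infty$ limit should force the operator norms of polynomials in $\tilde\rho_p(\phi)$ applied to a fixed curve operator to grow, which is incompatible with a uniform bound on the order as $p$ varies.

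The principal obstacle, and the reason the conjecture remains open in general, is the gap between the semiclassical statement that the order of $\tilde\rho_p(\phi)$ tends to infinity \emph{with $p$} and the precise statement that for each large $p$ the order is \emph{actually} infinite. The eigenvalues of $\tilde\rho_p(\phi)$ lie in the cyclotomic field $\Q(\zeta_p)$, so excluding finite order at a fixed large $p$ requires a genuinely arithmetic input: one must combine Galois-theoretic constraints on which roots of unity can occur as eigenvalues with the geometric data furnished by $\lambda$. In the special setting of Conjecture~\ref{AMU2}, the Birman exact sequence identifies $\gamma \in \pi_1(S)$ with a point-push, and one might hope to circumvent the full semiclassical machinery by exploiting the explicit Koberda--Santharoubane formulas for $\rho_p$ on figure-eight loops (Lemma~\ref{lem:KobSanth}) together with a covering-space argument in the spirit of Lemma~\ref{lem:P LERF} to promote the figure-eight result to all non-simple, non--proper-power elements.
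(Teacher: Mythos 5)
The statement you are trying to prove is Conjecture~\ref{AMU1}, which the paper does not prove and explicitly treats as open: it is quoted from Andersen--Masbaum--Ueno, used only as motivation (it implies Conjecture~\ref{AMU2} via Kra's theorem), and the only progress cited is the partial result of March\'e--Santharoubane for Euler incompressible elements. So there is no paper proof to match, and your text would need to stand alone as a complete argument --- which it does not, as you yourself concede. The decisive gap is exactly where you locate it: the semiclassical machinery (curve operators as Toeplitz-type operators, Egorov-type conjugation to the classical action on the character variety) yields asymptotic statements as $p\to\infty$, e.g.\ that the order of $\tilde\rho_p(\phi)$ is unbounded in $p$ or that traces of $\tilde\rho_p(\phi^n)$ can be distinguished from the identity for $n$ up to a $p$-dependent range. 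The conjecture demands something strictly stronger: that for each fixed sufficiently large $p$ the order is actually infinite. Nothing in your sketch bridges this; the sentence about combining ``Galois-theoretic constraints'' on eigenvalues in $\Q(\zeta_p)$ with the stretch factor $\lambda$ is a statement of what would be needed, not an argument. A proof attempt that names its own missing step is a research program, not a proof.

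Two further points. First, your reduction step is also not free: after cutting along the canonical reducing multicurve, the summands of $V_p(S)$ are state spaces of surfaces with colored boundary, so you need the infinite-order statement for pseudo-Anosov classes on such pieces (and you must handle the projective ambiguity --- infinite order in $\mathrm{PGL}$ of a tensor factor does transfer, but this should be said, and the ``identity on the other factors'' claim requires the power you took to kill twists along $\mathcal{C}$, which affects no essential content but does need stating). Second, your closing suggestion --- using the figure-eight result of Lemma~\ref{lem:KobSanth} plus a covering-space argument as in Lemma~\ref{lem:P LERF} --- is precisely the mechanism the paper uses for Theorem~\ref{thm:main}, but it cannot prove Conjecture~\ref{AMU2} (let alone \ref{AMU1}): passing to a finite cover $S'$ and inducing changes the representation, so one detects $\gamma$ by \emph{some} finite-dimensional representation of $\pi_1(S)$, not by $\rho_p$ itself, which is what the conjecture asserts.
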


To see how Conjecture \ref{AMU1} implies Conjecture \ref{AMU2}, we use a result of Kra (see Theorem 1.1 of~\cite{Kra}): if $1\neq\gamma \in \pi_1(S)$ is non--simple and not a proper power then the corresponding mapping class in $\Mod^1(S)$ is pseudo-Anosov on the subsurface of $S$ filled by $\gamma$. It is now clear that Conjecture \ref{AMU1} implies Conjecture \ref{AMU2}.

We conclude with some progress towards resolving Conjecture~\ref{AMU1} due to J. March\'e and the second author in~\cite{MarchSanth}. Precisely, they show that Conjecture~\ref{AMU1} holds for a large number of elements of $\pi_1(S)$, namely the ones which are \emph{Euler incompressible}. Viewing the geodesic representative for $1\neq\gamma\in\pi_1(S)$ as an embedded subgraph of $S$, Euler incompressibility means that no Eulerian cycle (i.e. one which visits each edge at most once) bounds a disk in $S$. In particular, they recover one of the main results of~\cite{KoberdaSantharoubane}.


\begin{thebibliography}{99}

\bibitem{AMU} J.E. Andersen, G. Masbaum, K. Ueno. Topological quantum field theory and the Nielsen-Thurston classification of $\mathrm{M}(0,4)$. \emph{Math. Proc. Cam. Phil. Soc. } 141(2006) 447--488.

\bibitem{Arettines} C. Arettines. A combinatorial algorithm for visualizing representatives with minimal self-intersection. \emph{J. Knot Theory Ramifications} 24 (2015), no. 11, 1550058, 17 pp.

\bibitem{BirmanSeries} J. Birman and C. Series. An algorithm for simple curves on surfaces. \emph{J. London Math. Soc.} (2) 29 (1984), no. 2, 331--342.

\bibitem{Boggi} M. Boggi, with an appendix by Pavel Zalesskii. Characterizing closed curves on Riemann surfaces via homology groups of coverings. To appear in \emph{Int. Math. Res. Not. IMRN}, 2016.

\bibitem{Cahn} P. Cahn. A generalization of the Turaev cobracket and the minimal self-intersection number of a curve on a surface. \emph{New York J. Math.} 19 (2013), 253--283.

\bibitem{ChasKrongold1} M. Chas and F. Krongold. An algebraic characterization of simple closed curves on surfaces with boundary. \emph{J. Topol. Anal.} 2 (2010), no. 3, 395--417.

\bibitem{ChasKrongold2} M. Chas and F. Krongold. Algebraic characterization of simple closed curves via Turaev's cobracket. \emph{J. Topology} (2016), no. 1, 91--104. 

\bibitem{ChasPhillips1} M. Chas and A. Phillips. Self-intersection numbers of curves on the punctured torus. \emph{Experiment. Math.} 19 (2010), no. 2, 129--148.

\bibitem{ChasPhillips2} M. Chas and A. Phillips. Self-intersection numbers of curves in the doubly punctured plane. \emph{Exp. Math.} 21 (2012), no. 1, 26--37.

\bibitem{CohenLustig} M. Cohen and M. Lustig. Paths of geodesics and geometric intersection numbers. I. \emph{Combinatorial Group Theory and Topology}, Alta, Utah, 1984. Ann. of Math. Studies 111, \emph{Princeton University Press, Princeton, NJ}, (1987), 479--500.

\bibitem{Chillingworth1} D. Chillingworth. Winding numbers on surfaces. I. \emph{Math. Ann.} 196 (1972), 218--249.

\bibitem{Chillingworth2} D. Chillingworth. Winding numbers on surfaces. II. \emph{Math. Ann.} 199 (1972), 131--153.

\bibitem{Goldman} W. Goldman. The symplectic nature of fundamental groups of surfaces. \emph{Adv. in Math.} 54 (1984), no. 2, 200--225.

\bibitem{HassScott} J. Hass and P. Scott. Intersections of curves on surfaces. \emph{Israel J. Math.} 51 (1985), no. 1--2, 90--120.

\bibitem{Kra} I. Kra. On the Nielsen-Thurston-Bers type of some self maps of Riemann surfaces. \emph{Acta Math.} 134 (1981), no. 3--4, 231--270.

\bibitem{KoberdaSantharoubane} T. Koberda and R. Santharoubane. Quotients of surface groups and homology of finite covers via quantum representations. \emph{Invent. Math.} 206 (2016), no. 2, 269--292. 

\bibitem{MalesteinPutman} J. Malestein and A. Putman. On the self-intersections of curves deep in the lower central series of a surface group. \emph{Geom. Dedicata} 149 (2010), 73--84.

\bibitem{MarchSanth} J. March\'e and R. Santharoubane. Asymptotics of quantum representations of surface groups. Preprint.

\bibitem{Patel} P. Patel. On a theorem of Peter Scott. \emph{Proc. Amer. Math. Soc.} 142 (2014), no. 8, 2891--2906. 

\bibitem{Scott} P. Scott. Subgroups of surface groups are almost geometric. \emph{J. London Math. Soc.} (2) 17 (1978), no. 3, 555--565.

\bibitem{Turaev} V. Turaev. Algebras of loops on surfaces, algebras of knots, and quantization. \emph{Braid group, knot theory and statistical mechanics}, 59--95, Adv. Ser. Math. Phys., 9, \emph{World Sci. Publ., Teaneck, NJ}, 1989.

\bibitem{TuraevViro} V. Turaev and O. Viro. Intersection of loops in two--dimensional manifolds. II. Free loops. \emph{Mat. Sbornik} 121(163) (1983), no. 3, 359--369.

\bibitem{Z1965} H. Zieschang. Algorithmen f\"ur einfache Kurven auf Fl\"achen. \emph{Math. Scand.} 17 (1965), 17--40.

\bibitem{Z1969} H. Zieschang. Algorithmen f\"ur einfache Kurven auf Fl\"achen. II. \emph{Math. Scand.} 25 (1969), 49--58.

\end{thebibliography}
\end{document}